\documentclass[12pt]{article}

\usepackage{a4wide}
\usepackage{graphicx}
\usepackage{amsmath}
\usepackage{amssymb}
\usepackage{amsthm}
\usepackage{enumerate}

\newtheorem{theorem}{Theorem}

\newtheorem{claim}[theorem]{Claim}

\DeclareMathOperator{\dist}{\textit{d}}
\DeclareMathOperator{\diam}{diam}

\begin{document}
\title{Edge growth in graph powers}
\author{A. Pokrovskiy\\ London School of Economics and Political Sciences \\ a.pokrovskiy@lse.ac.uk}
\maketitle

\begin{abstract}
For a graph $G$, its $r$th power $G^r$ has the same vertex set as $G$, and has an edge between any two vertices within distance $r$ of each other in $G$. We give a lower bound for the number of edges in the $r$th power of $G$ in terms of the order of $G$ and the minimal degree of $G$.  As a corollary we determine how small the ratio $e(G^r)/e(G)$ can be for regular graphs of diameter at least $r$. 
\end{abstract}

\section{Introduction}
We will consider both graphs that may have loops and graphs in which loops are explicitly forbidden.  Loopless graphs will be denoted by Roman italic letters, such as ``$G$", while graphs with loops allowed will be denoted by curly letters, such as ``$\mathcal{G}$".  For two vertices~$x$ and $y$ (possibly $x = y$) we only allow one edge between $x$ and $y$.
The $r$th power of~$G$, denoted $G^r$, is the graph with vertex set $V(G)$, and $xy$ an edge whenever $x$ and $y$ are within distance $r$ of each other.  
The \emph{diameter} of a connected graph is the smallest $r$ for which $G^r$ is complete.    For all standard notation we refer to \cite{Diestel}.

For a connected graph of diameter at least $r$, one would expect $G^r$ to have substantially more edges than $G$.  In this note we examine how small the ratio $e(G^r)/e(G)$ can be, focusing primarily on the case when $G$ is a regular graph.

The motivation for studying this comes from a corollary  of the Cauchy-Davenport Theorem from additive number theory which we will now state.  The Cayley graph of a subset $A\subseteq \mathbb{Z}_p$ is constructed on the vertex set $\mathbb{Z}_p$. For two distinct vertices $x, y \in \mathbb{Z}_p$, we define $xy$ to be an edge whenever $x -  y \in A$ or $y -  x \in A$.  The following is a consequence of the Cauchy-Davenport Theorem (usually stated in the language of additive number theory).

\begin{theorem} [Cauchy, Davenport, \cite{Cauchy, Davenport}]
Let $p$ be a prime, $G$ the Cayley graph of a set $A \subseteq \mathbb{Z}_p$, and $r$ an integer such that $r < \diam(G)$.  Then we have
\begin{equation} \label{eq:cauchy}
\frac{e(G^r)}{e(G)}\geq r.
\end{equation}
\end{theorem}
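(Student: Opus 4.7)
\medskip

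\noindent\textbf{Proof plan.} The plan is to translate the inequality into the additive-combinatorics language that Cauchy--Davenport is made for, and then apply the theorem iteratively.

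First I would fix notation. Let $S = (A \cup (-A)) \setminus \{0\} \subseteq \mathbb{Z}_p$, which is the ``connection set'' of the Cayley graph; by construction $S = -S$ and $\{x,y\}$ is an edge of $G$ iff $y - x \in S$. Setting $S_0 = S \cup \{0\}$, I claim that $\{x,y\}$ is an edge of $G^r$ if and only if $x \neq y$ and $y - x \in r S_0 := \underbrace{S_0 + \cdots + S_0}_{r}$. Indeed, an $x$--$y$ walk of length $k \leq r$ in $G$ gives a representation $y - x = s_1 + \cdots + s_k$ with $s_i \in S$, which after padding with $r-k$ zeros lies in $r S_0$; conversely any element of $r S_0 \setminus \{0\}$ corresponds to such a walk. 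Using $|V(G)| = p$ and that $G$, $G^r$ are regular Cayley graphs, this yields the clean identities $e(G) = \tfrac{1}{2} p |S|$ and $e(G^r) = \tfrac{1}{2} p |r S_0 \setminus \{0\}|$, so the problem reduces to showing $|r S_0 \setminus \{0\}| \geq r |S|$.

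Next I would apply the iterated form of Cauchy--Davenport to the $r$-fold sumset $r S_0$. Since $|S_0| = |S| + 1$, iterating $|A+B| \geq \min(p, |A|+|B|-1)$ gives
\[
|r S_0| \;\geq\; \min\bigl(p,\; r(|S|+1) - (r-1)\bigr) \;=\; \min(p,\; r|S| + 1).
\]
The hypothesis $r < \diam(G)$ is exactly what rules out the trivial case: it guarantees the existence of vertices $x \neq y$ with $y - x \notin r S_0$, so $r S_0 \subsetneq \mathbb{Z}_p$. Hence the minimum above is achieved by $r |S| + 1$, and since $0 \in r S_0$ we obtain $|r S_0 \setminus \{0\}| \geq r |S|$, proving the theorem after dividing by $|S|$.

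I do not expect a real obstacle here; the only subtle step is the translation from walks of length \emph{at most} $r$ in $G$ to the $r$-fold sumset of $S_0$ (rather than the union $\bigcup_{k \le r} kS$), which is what allows a single clean application of Cauchy--Davenport. The role of the diameter hypothesis is just to prevent the sumset from saturating $\mathbb{Z}_p$ so that the non-trivial bound in Cauchy--Davenport is the active one.
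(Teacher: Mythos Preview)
Your argument is correct and is exactly the standard derivation the paper is alluding to when it says the statement ``is a consequence of the Cauchy--Davenport Theorem (usually stated in the language of additive number theory)''; the paper itself gives no proof of this result, treating it purely as background and citing \cite{Cauchy, Davenport}. So there is nothing to compare against --- your write-up simply fills in the deduction the paper leaves to the reader, and it does so cleanly (the key observation, as you note, being that the connection set of $G^r$ is $rS_0\setminus\{0\}$, which lets iterated Cauchy--Davenport apply directly and makes the hypothesis $r<\diam(G)$ precisely the condition $rS_0\neq\mathbb{Z}_p$).
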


One could ask whether inequalities similar to (\ref{eq:cauchy}) hold for more general families of graphs.  Motivated by the fact that Cayley graphs are regular, Hegarty asked this question for regular graphs and proved the following theorem.

\begin{theorem} [Hegarty, \cite{Hegarty}] \label{thm:hegarty}
Let $G$ be a regular, connected graph, with $\diam(G)\geq 3$.  Then we have
\begin{equation}\label{eq:hegarty}
\frac{e(G^3)}{e(G)}\geq 1+ \epsilon.
\end{equation}
Where $\epsilon \approx  0.087.$
\end{theorem}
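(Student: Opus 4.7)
The plan is to rewrite $e(G^3)/e(G)$ in a form that isolates the contribution of distance-$2$ and distance-$3$ pairs, then force a quantitative expansion somewhere using the diameter hypothesis. Writing $n=|V(G)|$, $d$ for the common degree, and $b_i(v)$ for the number of vertices at distance exactly $i$ from $v$, we have $\deg_{G^3}(v)=d+b_2(v)+b_3(v)$ and $2e(G)=nd$, giving
\[
\frac{e(G^3)}{e(G)} \;=\; 1+\frac{\sum_{v\in V(G)}\bigl(b_2(v)+b_3(v)\bigr)}{nd}.
\]
It thus suffices to exhibit $\epsilon\approx 0.087$ with $\sum_{v}\bigl(b_2(v)+b_3(v)\bigr)\geq \epsilon\,nd$.

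The first step is to exploit $\diam(G)\geq 3$ by pinning down a shortest path $x_0x_1x_2x_3$ of length exactly~$3$, which gives hard local constraints such as $N(x_0)\cap N(x_3)=\emptyset$ and $x_3\notin N(x_0)\cup N(x_1)$. These are meant to produce one or more vertices whose second and third neighbourhoods are demonstrably large, seeding the global estimate.

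The second step is a pointwise lower bound on $b_2(v)$ in terms of triangles through $v$: for every neighbour $u$ of $v$, the vertices of $N(u)\setminus\bigl(N(v)\cup\{v\}\bigr)$ lie at distance exactly $2$ from $v$, so $b_2(v)\geq d-1-|N(u)\cap N(v)|$. Averaging over $u\in N(v)$ shows that $b_2(v)$ is small only when $v$ lies in many triangles, i.e.\ when $N(v)\cup\{v\}$ is close to a clique. This is the central trade-off: a small second neighbourhood forces local clique structure, and local clique structure is in tension with the existence of the distance-$3$ path $x_0x_1x_2x_3$.

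Combining these ingredients through a global averaging (or contradiction) argument should close the loop: either many vertices admit large $b_2+b_3$ directly, or the graph is forced into a near-union-of-cliques regime in which the diameter-$3$ endpoints supply the missing contribution. The hardest step will be extracting the sharp constant $\epsilon\approx 0.087$; I would expect this to arise from a two-variable optimisation balancing the fraction of vertices with near-clique neighbourhoods against the second-neighbourhood expansion of the remaining vertices, with the extremal configuration looking like a regular blow-up of a short path of cliques. Pinning down this optimisation—rather than the qualitative framework—is where the real work lies.
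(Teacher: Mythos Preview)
The paper does not prove this statement: Theorem~\ref{thm:hegarty} is quoted from Hegarty's paper purely as background, and the present paper explicitly declines even to prove the $r=3$ case of its own main theorem, attributing that to DeVos and Thomass\'e. There is therefore no proof here to compare your proposal against.

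Independently of that, what you have written is not a proof but an outline that explicitly defers the decisive step. The reformulation $e(G^3)/e(G)=1+\frac{1}{nd}\sum_v\bigl(b_2(v)+b_3(v)\bigr)$ is correct, and the observation that a small $b_2(v)$ forces $N(v)$ to be nearly a clique is a reasonable heuristic. But your final paragraph openly concedes that ``pinning down this optimisation \ldots\ is where the real work lies,'' and nothing preceding it produces any positive lower bound at all, let alone the specific constant $\epsilon\approx 0.087$. The pointwise estimate $b_2(v)\ge d-1-|N(u)\cap N(v)|$ does not by itself prevent $b_2(v)+b_3(v)$ from being $o(d)$ for almost every vertex; turning the ``near-union-of-cliques'' picture into a genuine linear-in-$nd$ lower bound requires a concrete global counting argument linking the clique-rich vertices to the length-$3$ geodesic, and you have not supplied one. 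As it stands, the proposal records plausible intuitions but contains no argument that actually closes.
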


The constant $\epsilon$ has since been improved to $\frac{1}{6}$ by the author \cite{Pokrovskiy} and to $\frac{3}{4}$ by DeVos and Thomass\'e \cite{Thomasse}.  The value $\epsilon = \frac{3}{4}$ is optimal in the sense that there exists a sequence of regular graphs of diameter greater than $3$, $G_m$, satisfying $\frac{e(G_m^3)}{e(G_m)}\to \frac{7}{4}$ as $m \to \infty$ \cite{Thomasse}.  It is natural to ask what happens for other powers of $G$.

For $G^2$, Hegarty showed that no inequality similar to (\ref{eq:hegarty}) with $\epsilon>0$  can hold for regular graphs in general, by exhibiting a sequence of regular, connected graphs of diameter greater than $2$, $G_m$, satisfying $\frac{e(G_m^2)}{e(G_m)}\to 1$ as $m \to \infty$ \cite{Hegarty}.  Goff \cite{Goff} studied the $2$nd power of regular graphs further and showed that for any $d$-regular connected graph $G$ such that $\diam(G)>2$, we have $\frac{e(G^2)}{e(G)}\geq 1+\frac{3}{2 d}-o\left(\frac{1}{d}\right)$.  For general $d$-regular connected graphs $G$ with $\diam(G)>2$, the $\frac{3}{2d}$ term in this result cannot be replaced with $\frac{\lambda}{ d}$ for any $\lambda>\frac{3}{2}$.  However it is shown in \cite{Goff} that with the exception of two families of exceptional graphs, we have  $\frac{e(G^2)}{e(G)}\geq 1+\frac{2}{d}-o\left(\frac{1}{d}\right)$ for all $d$-regular connected graphs with $\diam(G)>2$.

In this note we consider all $r\geq 4$ and determine how small $\frac{e(G^r)}{e(G)}$ can be for $G$ a regular, connected graph of diameter at least $r$.  We prove the following theorem. 

\begin{theorem} \label{thm:regular}
Let $G$ be a connected, regular graph, and $r$ a positive integer such that $\diam(G) \geq r$.  
\begin {itemize}
 \item If $r\equiv 0 \pmod{3}$, then we have
$$\frac{e(G^r)}{e(G)}\geq \frac{r+3}{3}-\frac{3}{2(r+3)}  .$$
 \item If $r\not \equiv 0 \pmod{3}$, then we have
$$\frac{e(G^r)}{e(G)}\geq \left\lceil \frac{r}{3} \right\rceil .$$
\end {itemize}
\end{theorem}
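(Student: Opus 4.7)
My plan is to bound the ball sizes $|B_r(v)|$ via BFS, relying on the elementary inequality that three consecutive BFS layers contain at least $d+1$ vertices:
\[
|V_{i-1}|+|V_i|+|V_{i+1}|\ \geq\ d+1\qquad\text{whenever } V_i\neq\emptyset,
\]
which holds because any $w\in V_i$ together with its $d$ neighbours lies in three consecutive layers. I would fix a vertex $u$ of eccentricity at least $r$ (guaranteed by $\diam(G)\geq r$), let $V_i=\{w:d(u,w)=i\}$ be the BFS layers from $u$, apply the triple inequality to disjoint windows centred at $i=1,4,\ldots$, and combine with $|V_0|=1$, $|V_1|=d$, and $|V_r|\geq 1$ to obtain $|B_r(u)|\geq \lceil r/3\rceil(d+1)+c_r$ for a small constant $c_r$. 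The same bound passes to every vertex: if $v$ has eccentricity $\geq r$ repeat the argument for $v$, otherwise $B_r(v)=V(G)$ and $n\geq|B_r(u)|$ gives it for free.

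Using $\deg_{G^r}(v)=|B_r(v)|-1$ and dividing the per-vertex bound by $d$ then yields $e(G^r)/e(G)\geq(\lceil r/3\rceil(d+1)+c_r-1)/d$. In the cases $r=3m+1$ and $r=3m+2$, more careful bookkeeping using a geodesic $v_0,v_1,\ldots,v_r$ and the disjoint radius-$1$ balls $B_1(v_0),B_1(v_3),\ldots$ inside $B_r(u)$ produces $|B_r(u)|\geq(m+1)(d+1)$ or $(m+1)(d+1)+1$ respectively, which is enough to push the ratio above $\lceil r/3\rceil=m+1$ and settle these cases.

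The main obstacle is the case $r\equiv 0\pmod 3$. With $r=3m$, the argument above only gives $e(G^r)/e(G)\geq m+(m+1)/d$, roughly a whole unit below the stated $m+1-1/(2(m+1))$ when $d$ is large. To close this gap I would try a case analysis keyed to near-tightness of the triple inequalities: when every triple is nearly tight, the layer sizes are forced into a rigid cyclic template like $(1,d,1,1,d-1,1,1,d-1,\ldots)$, and either this pattern propagates to the boundary -- where the stronger constraint $|V_{r-1}|+|V_r|\geq d+1$ (valid when $\mathrm{ecc}(v)=r$) supplies many extra vertices in the final two layers -- or the slack at some intermediate triple absorbs the deficit. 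A parallel approach is to run BFS from both endpoints of a length-$r$ geodesic and combine the two one-sided inequalities symmetrically, which should cancel the boundary losses and yield the exact correction $1/(2(r+3))$. I expect this combinatorial balancing, rather than the BFS decomposition itself, to be the technically delicate step.
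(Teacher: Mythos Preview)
Your treatment of the case $r\not\equiv 0\pmod 3$ is essentially the paper's argument: disjoint radius-$1$ balls along a geodesic inside $B_r(v)$ give $|B_r(v)|\ge\lceil r/3\rceil(d+1)$, and the fallback $B_r(v)=V(G)$ handles vertices of small eccentricity. That part is fine.

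The case $r\equiv 0\pmod 3$ has a genuine gap, and your proposed fixes do not close it. The obstruction is not a matter of sharper per-vertex BFS bookkeeping: in the extremal graphs $H_m$ described in the paper (with $r=3m$, $d=4m$), the vertices in the two end blocks satisfy $|B_r(v)|\approx(\tfrac{r}{3}+\tfrac12)d$, which is strictly below the target average $(\tfrac{r}{3}+1-\tfrac{3}{2(r+3)})d$. So no uniform lower bound on $|B_r(v)|$ can reach the theorem, and neither ``near-tightness case analysis'' nor ``BFS from both endpoints of one geodesic'' will help, because both are still aiming at a pointwise estimate. Your boundary observation $|V_{r-1}|+|V_r|\ge d+1$ is only available when $\operatorname{ecc}(v)=r$ exactly, whereas the bad vertices in $H_m$ have eccentricity $r+1$.

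What the paper does instead is global. Call $v$ \emph{insufficient} if $|B_r(v)|<(\tfrac{r}{3}+1)\delta$ (working with loops, so $\delta=d+1$). A sequence of short claims shows that any two insufficient vertices are either within distance $2$ or at distance $\ge r+1$, so the insufficient set breaks into equivalence classes $X_1,\dots,X_l$ that are pairwise far apart; moreover $|\mathcal G|\ge\tfrac{r+3}{6}\,\delta\, l$ and each insufficient $x\in X_i$ still satisfies $|N^r(x)|\ge |X_i|+\tfrac{r}{3}\delta$. Summing $|N^r(x)|$ over all vertices and completing the square,
\[
\sum_{i=1}^{l}\Bigl(|X_i|-\tfrac{\delta}{2}\Bigr)^{2}\ \ge\ 0,
\]
is exactly what produces the correction term $\tfrac{3}{2(r+3)}$. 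The point is that the deficit at insufficient vertices is controlled not by improving their individual ball sizes but by bounding how many of them there can be relative to $|\mathcal G|$. Your plan contains no analogue of this step.
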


The case $r=3$ of Theorem~\ref{thm:regular} is due to DeVos and Thomass\'e \cite{Thomasse}, and will not be proved here.  Theorem \ref{thm:regular} gives a lower bound on the ratio $\frac{e(G^r)}{e(G)}$ for regular graphs. The bounds on $\frac{e(G^r)}{e(G)}$ in Theorem~\ref{thm:regular} are optimal in the following sense.  For each $r$, there exists a sequence of regular, connected graphs of diameter at least $r$, $G_m$, such that $\frac{e(G_m ^r)}{e(G_m)}$ tends to the bound given by Theorem~\ref{thm:regular} as $m$ tends to infinity.  We refer to Figure~\ref{examples} for a diagram of the sequences that we construct.
\begin{figure}
  \centering
     \includegraphics[width=0.7\textwidth]{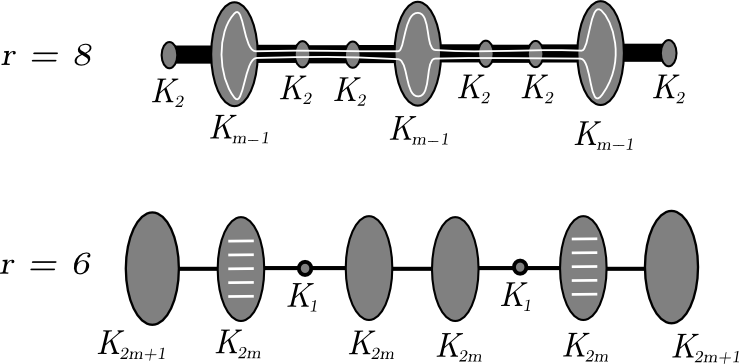}
  \caption{ Graphs showing the cases ``$r = 8$'' and ``$r=6$'' of Theorem~\ref{thm:regular} to be optimal.  The grey ovals represent complete graphs of specified order.  The black lines between the sets represent all the edges being present between them.  The white cycle in the ``$r=8$'' case represents the removal of a single cycle passing through all the vertices in the sets it intersects. The white matchings in the ``$r=6$'' case represent a perfect matching being removed from the specified sets.\label{examples}}
\end{figure}

To see this for $r \not\equiv 0 \pmod{3}$, we  construct the following sequence of graphs $G_m$.  Take disjoint sets of vertices $N_0,...,N_r$,  with $|N_i|=m - 1$ if $i\equiv 1 \pmod 3$ and $|N_i| = 2$ otherwise.  Add all the edges between $N_i$ and $N_{i+1}$ for $i = 0$, $1$, $\dots$, $r-1$.  Add all the edges within $N_i$ for all $i$.  Remove a cycle passing through all the vertices in $N_1 \cup ... \cup N_{r-1}$.  It is easy to see that $G_m$ is $m$-regular and has diameter $r$.  If $r\equiv 1 \pmod 3$ then $|G_m|=\frac{1}{3}(rm+2m+3r)$ will hold.  Since $G_m$ is $m$-regular, we have $e(G_m)=\frac{1}{6}(rm+2m+3r)m$.  Since $G_m ^r$ is complete, we have $e(G_m ^r)= \frac{1}{18}(rm+2m+3r)(rm+2m+3r-1)$.  This implies that $\frac{e(G_m ^r)}{e(G_m)} \to \left\lceil \frac{r}{3} \right\rceil$ as $m \to \infty$.  A similar calculation can be used to show that the same limit holds when $r\equiv 2 \pmod 3$.

For $r \equiv 0 \pmod{3}$,  we construct the following sequence of graphs $H_m$ to show that Theorem~\ref{thm:regular} is optimal.
Take disjoint sets of vertices $N_0,...,N_{r+1}$.  Let $|N_0|=|N_{r+1}|=2m+1$, $|N_i|= 1$ if $i \equiv 2 \pmod{3}$, and $|N_i|= 2m$ otherwise.  Add all the edges between $N_i$ and $N_{i+1}$ for $i = 0$, $1$, $\dots$, $r$.  Add all the edges within $N_i$ for all $i$.
  Delete a perfect matching from each of the sets $N_2$ and $N_{r}$.  This will ensure that $H_m$ is $4m$-regular and has diameter $r+1$.  Note that $|H_m|=\frac{1}{3}(4rm+r+12m+6)$, and so we have $e(H_m)=\frac{1}{6}(4rm+r+12m+6)4m$.  The only edges missing from $H_m ^r$ will be between $N_0$ and $N_{r+1}$, so we have $e(H_m ^r)=\frac{1}{18}(4rm+r+12m+6)(4rm+r+12m+5)-(2m+1)^2$.  This implies that  $\frac{e(H_m ^r)}{e(H_m)} \to \frac{r+3}{3}-\frac{3}{2(r+3)}$ as $m \to \infty$.
This construction is a generalization of one from \cite{Thomasse}.

All the examples constructed above have their diameter close to $r$.  If a graph $G$ has diameter larger than $r$, it seems that the bounds of Theorem \ref{thm:regular} can be improved.  Some results in this direction have been obtained DeVos, McDonald and Scheide \cite{DeVos}.  

The requirement of $G$ being regular in the above theorems is quite restrictive.  Following~\cite{Thomasse}, we will instead assume that $G$ has minimum degree $\delta(G)$, and give the following bound on $e(G^r)$ in terms of $|G|$ and $\delta(G)$.

\begin{theorem}\label{thm:noloops}
Let ${G}$ be a connected graph, and $r$ a positive integer such that $\diam(G)\geq r$.
\begin {itemize}
 \item If $r\equiv 0 \pmod{3}$, then we have
$$e(G^r)\geq \left(\frac{r+3}{6}-\frac{3}{4(r+3)} \right) \delta(G) |G|.$$
 \item If $r\not \equiv 0 \pmod{3}$, then we have
$$e(G^r)\geq \frac{1}{2}\left\lceil \frac{r}{3} \right\rceil \delta(G) |G|.$$
\end {itemize}
\end{theorem}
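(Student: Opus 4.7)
The plan is to lower-bound $\sum_{v\in V(G)}|B_r(v)|$, where $B_r(v)=\{u\in V(G):d(u,v)\le r\}$, since the identity $2\,e(G^r)+|V(G)|=\sum_v|B_r(v)|$ reduces the theorem to bounding the average ball size. I will analyse $|B_r(v)|$ vertex by vertex using a BFS decomposition from $v$, splitting into cases depending on whether $\mathrm{ecc}(v)\ge r$ or $\mathrm{ecc}(v)<r$.

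For $\mathrm{ecc}(v)\ge r$, write $L_i=\{u:d(u,v)=i\}$ and $\lambda_i=|L_i|$. Since $N[u]\subseteq L_{i-1}\cup L_i\cup L_{i+1}$ for $u\in L_i$, one has $\lambda_0=1$, $\lambda_1\ge\delta(G)$, and the \emph{triple inequality} $\lambda_{i-1}+\lambda_i+\lambda_{i+1}\ge\delta(G)+1$ for every interior $i$; when additionally $\mathrm{ecc}(v)=r$ the \emph{boundary inequality} $\lambda_{r-1}+\lambda_r\ge\delta(G)+1$ holds. If instead $\mathrm{ecc}(v)<r$, one has $B_r(v)=V(G)$, and a lower bound on $|V(G)|$ is obtained by applying the same triple inequalities to a BFS from a vertex at distance exactly $r$ (which exists since $\diam(G)\ge r$).

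For $r\not\equiv0\pmod 3$, packing the layers $L_2,\dots,L_r$ into $\lfloor r/3\rfloor$ disjoint triples and summing the triple inequalities together with $\lambda_0+\lambda_1\ge 1+\delta(G)$ gives $|B_r(v)|\ge\lceil r/3\rceil\delta(G)+O(1)$; summing over $v$ then yields the theorem. For $r\equiv 0\pmod 3$ the triple packing of $\{L_0,\dots,L_r\}$ leaves an awkward residual layer and the simple per-vertex bound becomes too weak. The sharper constant $\tfrac{r+3}{6}-\tfrac{3}{4(r+3)}$ is obtained by averaging two sub-cases: at vertices with $\mathrm{ecc}(v)=r$ the boundary inequality contributes an extra $\delta+1$ to $|B_r(v)|$, giving the stronger per-vertex bound $(\lceil r/3\rceil+1)\delta+O(1)$; at vertices with $\mathrm{ecc}(v)>r$ one combines the BFS from $v$ with a BFS from a second, sufficiently distant vertex to recover a comparable bound.

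The main obstacle is the $r\equiv 0\pmod 3$ case. Once $\delta(G)$ is large and every vertex has $\mathrm{ecc}(v)>r$, the boundary inequality is unavailable and the simple per-vertex bound $|B_r(v)|\ge\lceil r/3\rceil\delta+O(1)$ falls short of the target $\tfrac{r+3}{3}\delta=(\lceil r/3\rceil+1)\delta$. To match the $H_m$ extremal configuration exactly, the averaging between vertices of different eccentricity has to be performed tightly enough to capture the $-\tfrac{3}{4(r+3)}$ correction. This is the natural generalization to all multiples of $3$ of the DeVos--Thomass\'e analysis for $r=3$.
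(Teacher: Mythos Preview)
Your treatment of the case $r\not\equiv 0\pmod 3$ is essentially the paper's argument, rephrased: summing the triple inequality $\lambda_{i-1}+\lambda_i+\lambda_{i+1}\ge\delta+1$ over disjoint blocks is exactly the same as observing that $N(x_0),N(x_3),\dots$ are disjoint along a geodesic. This part is fine.

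The gap is in the $r\equiv 0\pmod 3$ case, and you have correctly located it but not closed it. Your proposed dichotomy \emph{eccentricity $=r$ versus eccentricity $>r$} is the wrong axis for the averaging. In the extremal graphs $H_m$ the diameter is $r+1$, and the ``bad'' vertices (those with $|B_r(v)|$ close to $\tfrac{r}{3}\delta$) are exactly the ones in the two end-blobs, all of which have eccentricity $r+1$; your boundary inequality never fires for them, and ``combine with a BFS from a second, sufficiently distant vertex'' is not a mechanism that produces the constant $\tfrac{r+3}{6}-\tfrac{3}{4(r+3)}$.

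What the paper does instead is structural. Call $v$ \emph{insufficient} if $|B_r(v)|<(\tfrac{r}{3}+1)\delta$. The key lemmas show that any two insufficient vertices are either at distance $\le 2$ or at distance $\ge r+1$ (and usually $\ge r+2$); hence the insufficient set breaks into equivalence classes $X_1,\dots,X_l$ under ``distance $\le 2$''. One then proves two facts: (a) $|G|\ge \tfrac{r+3}{6}\,\delta\, l$, because the $\tfrac{r}{2}$-neighbourhoods of the $X_i$ are disjoint and each contains a long geodesic; and (b) for insufficient $x\in X_i$ one has $|B_r(x)|\ge |X_i|+\tfrac{r}{3}\delta$, since $X_i$ is disjoint from $N(x_2),N(x_5),\dots,N(x_{r-1})$ along a geodesic out of $x$. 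Plugging (a) and (b) into $\sum_v|B_r(v)|$ and completing the square,
\[
\sum_{i=1}^l\Bigl(|X_i|^2-|X_i|\delta+\tfrac14\delta^2\Bigr)=\sum_{i=1}^l\Bigl(|X_i|-\tfrac{\delta}{2}\Bigr)^2\ge 0,
\]
is what manufactures the $-\tfrac{3}{4(r+3)}$ correction. Your outline contains neither the clustering lemma nor this quadratic step, and without them the averaging cannot be made tight.
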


The case $r=3$ of Theorem~\ref{thm:noloops} is due to DeVos and Thomass\'e \cite{Thomasse}, and will not be proved here.  Theorem~\ref{thm:noloops} easily implies Theorem~\ref{thm:regular}.

\section{Proof of Theorem~\ref{thm:noloops}}

We will prove a version of Theorem~\ref{thm:noloops} for graphs which may contain loops since in that setting the proof seems more natural.

The neighbourhood of a vertex $x$, $N(x)$, is defined as the set of vertices adjacent to~$x$. (If there is a loop at $x$, then $N(x)$ will contain $x$ itself.)  The degree of $x$ is $|N(x)|$.    For graphs with loops allowed, $\mathcal{G}^r$ is defined identically to how it was defined for loopless graphs.  Note that if $\mathcal{G}$ is a graph with loops allowed, then $\mathcal{G}^r$ always has a loop at each vertex. 
For two sets of vertices $X$ and $Y$, let $d(X,Y)$ denote the length of a shortest path between a vertex in $X$ and a vertex in $Y$.  If $X$ is a set of vertices, let $N^r(X)$ be the set of vertices at distance at most $r$ from $X$.  We abbreviate $N^r(\{x\})$ as $N^r(x)$ and $d(\{x\},\{y\})$ as $d(x,y)$.

We prove the following theorem, and then deduce Theorem~\ref{thm:noloops} as a corollary.  Several ideas in the proof of Theorem~\ref{thm:loops} are taken from~\cite{Thomasse}.  In particular, Claims \ref{lem:vertexbound} and \ref{lem:insufficientbound} are analogues of claims proved in~\cite{Thomasse}.
\begin{theorem}\label{thm:loops}
Let $\mathcal{G}$ be a connected graph, and $r$ a positive integer such that $r\geq 6$ and $\diam(G)\geq r$.
\begin {itemize}
 \item If $r\equiv 0 \pmod{3}$, then we have
$$e(\mathcal{G}^r)\geq \left(\frac{r+3}{6}-\frac{3}{4(r+3)} \right)\delta(\mathcal{G}) |\mathcal{G}| + \frac{1}{2}|\mathcal{G}|.$$
 \item If $r\not \equiv 0 \pmod{3}$, then we have
$$e(\mathcal{G}^r)\geq \frac{1}{2}\left\lceil \frac{r}{3} \right\rceil \delta(\mathcal{G}) |\mathcal{G}| + \frac{1}{2}|\mathcal{G}|.$$
\end {itemize}
\end{theorem}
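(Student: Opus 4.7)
My approach follows the DeVos-Thomass\'e strategy (referenced in the excerpt), generalised to all $r\geq 6$. Since $\diam(\mathcal{G})\geq r$, I fix at the outset a geodesic $P=v_0v_1\cdots v_r$ of length $r$. For each vertex $x$, set $a(x)=d(x,v_0)$ and $b(x)=d(x,v_r)$; the triangle inequality forces $a(x)+b(x)\geq r$. The plan is to bound $|N^r(x)|$ below in terms of $\delta(\mathcal{G})$ for every $x$, then convert to an edge count via the identity $2e(\mathcal{G}^r)=|\mathcal{G}|+\sum_{x}|N^r(x)|$ (each loop of $\mathcal{G}^r$ contributes $1$ to the sum, each non-loop edge contributes $2$).

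The first main step is the vertex bound, Claim~\ref{lem:vertexbound}. I would produce a collection of ``centres'' $\{x\}\cup\{v_i:i\in J\}$ in $\mathcal{G}$, pairwise at $\mathcal{G}$-distance at least $3$, with every centre at distance at most $r-1$ from $x$, so that their $\mathcal{G}$-neighbourhoods are pairwise disjoint subsets of $N^r(x)$ and $|N^r(x)|\geq(1+|J|)\delta(\mathcal{G})$. Two disjointness facts do the work: (a) if $|i-j|\geq 3$ then $N(v_i)\cap N(v_j)=\emptyset$, because $P$ is a geodesic; and (b) since $d(x,v_i)\geq|a(x)-i|$, we have $N(x)\cap N(v_i)=\emptyset$ as soon as $|a(x)-i|\geq 3$. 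Together with the upper bound $d(x,v_i)\leq\min(a(x)+i,b(x)+r-i)$, this lets me pack centres from the ``safe'' index set $\{0,\ldots,r-1-a(x)\}\cup\{b(x)+1,\ldots,r\}$. An elementary packing argument, combined with counting $x$ itself as a centre, should give $1+|J|\geq\lceil r/3\rceil$ for every $x$; the inclusion of $x$ as a centre is precisely what rescues vertices lying near the ``middle'' of $P$ (where $a$ and $b$ are both moderate), for which pure index-packing alone falls one short.

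Summing over all $x$ then yields $\sum_x|N^r(x)|\geq\lceil r/3\rceil\delta(\mathcal{G})|\mathcal{G}|$, and the identity above gives $e(\mathcal{G}^r)\geq\frac12\lceil r/3\rceil\delta(\mathcal{G})|\mathcal{G}|+\frac12|\mathcal{G}|$, which is exactly the theorem when $r\not\equiv 0\pmod 3$.

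The main obstacle is the case $r\equiv 0\pmod 3$, which requires an average of $\frac{r+3}{3}-\frac{3}{r+3}$ centres per vertex rather than the $r/3$ produced by the vertex bound alone. Inspection shows that the vertex bound is tight (equal to $r/3$) only for ``peripheral'' vertices close to $v_0$ or $v_r$, while ``interior'' vertices automatically yield strictly more centres. Claim~\ref{lem:insufficientbound} should formalise this: when the vertex bound is insufficient, the graph must contain many interior vertices whose surplus compensates for the peripheral deficit in a carefully weighted double count. The extremal graph $H_m$ described earlier in the excerpt, whose only missing edges in $H_m^r$ lie between $N_0$ and $N_{r+1}$, confirms that the correction term $3/(r+3)$ corresponds exactly to this peripheral deficit, so the final averaging must be tight; this balancing step is where I expect the real difficulty of the proof to lie.
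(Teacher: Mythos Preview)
Your approach has a genuine gap already in the $r\not\equiv 0\pmod 3$ case. You fix a \emph{single} geodesic $P=v_0\cdots v_r$ and try to bound $|N^r(x)|$ for \emph{every} $x$ by packing centres among $\{x\}\cup\{v_i\}$. But nothing forces an arbitrary $x$ to lie near $P$: if $a(x)\geq r$ and $b(x)\geq r$ (as happens, say, for a vertex far out on a long path attached to the middle of $P$), your ``safe'' index set $\{0,\dots,r-1-a(x)\}\cup\{b(x)+1,\dots,r\}$ is empty and you are left with the single centre $x$, far short of $\lceil r/3\rceil$. The paper avoids this by letting the geodesic depend on $x$: for each $x$, either some vertex is at distance $\geq r-1$ from $x$, in which case one uses the $1$-neighbourhood of a length-$(r-1)$ geodesic \emph{starting at $x$}; or else $N^r(x)=V(\mathcal{G})$, which suffices since $|\mathcal{G}|\geq\lceil r/3\rceil\delta$.

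Your reading of Claims~\ref{lem:vertexbound} and~\ref{lem:insufficientbound} is also off, and the $r\equiv 0\pmod 3$ sketch does not match the paper's mechanism. In the paper these are not per-vertex neighbourhood bounds relative to a fixed $P$. One calls $x$ \emph{insufficient} if $|N^r(x)|<(\tfrac r3+1)\delta$, proves that any two insufficient vertices are at distance $\leq 2$ or $\geq r+1$, and groups them into classes $X_1,\dots,X_l$ under ``distance $\leq 2$''. Claim~\ref{lem:vertexbound} is the global bound $|\mathcal{G}|\geq\tfrac{r+3}{6}\delta l$; Claim~\ref{lem:insufficientbound} says $|N^r(x)|\geq|X_i|+\tfrac r3\delta$ for insufficient $x\in X_i$. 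The finish is a completion of squares: the total deficit $\sum_i(|X_i|\delta-|X_i|^2)$ is at most $\tfrac14\delta^2 l\leq\tfrac{3}{2(r+3)}\delta|\mathcal{G}|$. Your ``peripheral versus interior'' intuition points the right way, but the structural classification of insufficient vertices and the resulting double bound on $|\mathcal{G}|$ and $|N^r(x)|$ in terms of the $|X_i|$ are the actual content, and they are absent from your plan.
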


\begin{proof}
For convenience, we will set $\delta = \delta(\mathcal{G})$.
If $P$ is a path between two vertices $x$ and $y$, we say that $P$ is a \emph{geodesic} if the length of $P$ is $\dist(x, y)$.   The notion of a geodesic was used in \cite{Thomasse}, and is useful because the neighbourhood of a geodesic must be quite large.  This is quantified in the following claim.

\begin{claim}\label{cla:geodesic}
Let $P$ be a length $k$ geodesic.  Then $|N(P)|\geq \left(\left\lfloor \frac{k}{3} \right\rfloor +1 \right)\delta$ holds.
\end{claim}
\begin{proof}
If $x_0, \ x_1, \dots,  x_{k}$ are the vertices of $P$ (in the order in which they occur along the path), then $N(x_0), \ N(x_3), \dots, N(x_{3\left\lfloor \frac{k}{3} \right\rfloor })$ are all disjoint, contained in $N(P)$, and of order at least $\delta$.  This implies the result.
\end{proof}

We now prove the case ``$r\not\equiv 0 \pmod{3}$'' of the theorem.

The diameter of $\mathcal{G}$ is at least $r$, so $\mathcal{G}$ contains a length $r$ geodesic, $P$. Claim \ref{cla:geodesic} implies that the following holds: 
\begin{equation}\label{eqn:thmeasy}
|\mathcal{G}|\geq |N(P)| \geq  \left(\left\lfloor \frac{r}{3} \right\rfloor +1 \right)\delta \geq \left\lceil \frac{r}{3} \right\rceil \delta.
\end{equation}

 Note that $\mathcal{G}^r$ contains a loop at every vertex, so we have $e(\mathcal{G}^r)=\sum_{v \in V(\mathcal{G})} \left( \frac{1}{2}|N^r (v)|+ \frac{1}{2} \right)$.  Thus to prove Theorem \ref{thm:loops} it is sufficent to exhibit $\left\lceil \frac{r}{3} \right\rceil \delta$ elements of $N^r(v)$ for each vertex $v \in V(G)$. 

Let $v$ be a vertex in $G$.  Suppose that there exists a length $r-1$ geodesic $P_v$ starting from $v$.  Then $N(P_v)$ is contained in $N^r(v)$, giving $$|N^r(v)|\geq |N(P_v)| \geq  \left(\left\lfloor \frac{r-1}{3} \right\rfloor +1 \right)\delta = \left\lceil \frac{r}{3} \right\rceil \delta.$$  
The second inequality is an application of Claim \ref{cla:geodesic}.

Suppose that all the vertices in $\mathcal{G}$ are within distance $r-1$ of $v$. In this case we have $N^r(v)=V(\mathcal{G})$, which is of order at least  $\left\lceil \frac{r}{3} \right\rceil \delta$ by (\ref{eqn:thmeasy}).  This completes the proof of the case ``$r\not\equiv 0 \pmod{3}$'' of the theorem.

\

For the rest of the proof fix $r$ such that $r\equiv 0 \pmod{3}$ and $r \geq 6$.

If $v$ is a vertex of $\mathcal{G}$, we say that $v$ is \emph{sufficient} if $|N^r(v)|\geq \left(\frac{r}{3}+1\right)\delta$.  Otherwise we say that $v$ is \emph{insufficient}.

The following is a useful property of insufficient vertices.
\begin{claim}\label{cla:longpath}
Let $v$ be an insufficient vertex.  Then there is some vertex at distance $r+1$ from~$v$.
\end{claim}
\begin{proof}
Since $\diam(\mathcal{G})\geq r$, Claim \ref{cla:geodesic} implies that $|\mathcal{G}| \geq \left(\frac{r}{3}+1\right)\delta$. By assumption $N^r(v)<\left(\frac{r}{3}+1\right)\delta$, so $v$ cannot be within distance $r$ from all the vertices in the graph.  
\end{proof}

The following three claims will allow us to bound the number of insufficient vertices in~$\mathcal{G}$.
\begin{claim}\label{lem:nice}
If $2<d(x,y)<r$ holds for  $x,y \in V(\mathcal{G})$, then either $x$ or $y$ is sufficient.
\end{claim}
\begin{proof}
Suppose that $x$ is insufficient. By Claim \ref{cla:longpath}, we can find a length $r$ geodesic starting from $x$ with vertex sequence $x,$ $x_1,$ $x_2, \dots, x_r$.

Suppose that $N(y)\cap N(x_i) \neq \emptyset$ for some $i$ with $3\leq i \leq r-3$. In this case $N(x),$ $N(x_3),$ $N(x_6),\dots , N(x_r)$ are all contained in $N^r(y)$.  There are $\frac{r}{3}+1$ of these, they are all disjoint (since $x,$ $x_1,$ $x_2, \dots, x_r$ form a geodesic), and are of order at least $\delta$.  Hence $y$ is sufficient.

Otherwise $N(y)\cap N(x_i) = \emptyset $  for all  $3\leq i \leq r-3$.  In this case $N(x),$ $N(y),$ $N(x_3),$ $N(x_6),\dots, N(x_{r-3})$ are all disjoint and contained in $N^r(x)$.  This contradicts our initial assumption that $x$ is insufficient.
\end{proof}

\begin{claim}\label{lem:notnice}
Let $x$ and $y$ be two vertices in $\mathcal{G}$ such that $d(x,y)= r$ or $d(x,y)=r+1$.  If there exists a vertex $z \in \mathcal{G}$ such that $d(z,x), \ d(z,y) \geq r-1$, then either $x$ or $y$ is sufficient.
\end{claim}

\begin{proof}
Choose any $z$ in $N^{r-1}(\{x,y\}) \setminus N^{r-2}(\{x,y\}$. This set is nonempty by the second assumption of the claim.  We will have $d(z,x), d(z,y) \geq r-1$ and either $d(z,x)=r-1$ or $d(z,y) = r-1$.  Without loss of generality assume that $d(z,x)=r-1$ and $d(z,y)\geq r-1$.

We will show that $x$ is sufficient.
Let $x,\ x_1, \dots, x_{d(x,y)-1},\ y$ be a geodesic between $x$ and $y$.
For $i = 1$, $\dots$, $d(x,y)-1$, the triangle inequality implies that
\begin{align}
 d(x_i,z) &\geq d(x,z) -d(x,x_i)=d(x,z)-i \label{eqn:triangle1}, \\
 d(x_i,z) &\geq d(y,z) -d(y,x_i)=d(y,z)-d(x,y)+i \label{eqn:triangle2}.
\end{align}
Averaging (\ref{eqn:triangle1}) and (\ref{eqn:triangle2}), and use the inequalities $d(z,x), d(z,y) \geq r-1$ and $d(x,y)\leq r+1$ gives 
\begin{equation}\label{eqn:average}
d(x_i,z)\geq \frac{r-3}{2}.
\end{equation}

If $r \geq 9$, then (\ref{eqn:average}) implies that $d(x_i,z)\geq 3$ for all $i$.  Hence  $N(x),$ $N(z),$ $N(x_3),$ $N(x_6),\dots, N(x_{r-3})$ are all disjoint and contained in $N^r(x)$.  Hence $x$ is sufficient.

If $r=6$, then (\ref{eqn:triangle1}) and (\ref{eqn:triangle2}) imply that $d(x_i,z)\geq 3$ for all $x_i$ except possibly $x_3$ or $x_4$.  In this case $N(z),\ N(x_2)$ and $N(x_5)$ are all disjoint and contained in $N^6(x)$.  Hence $x$ is sufficient.
\end{proof}

\begin{claim}\label{cor:cor}
If $d(x,y)= r$ holds for  $x,y \in V(\mathcal{G})$, then either $x$ or $y$ is sufficient.
\end{claim}
\begin{proof}
Suppose that $x$ and $y$ are insufficient.
By Claim \ref{cla:longpath} there exists $z \in V(\mathcal{G})$ such that $d(x,z)=r+1$.  Let $x,\ x_1, \dots, x_{r-1}, y$ be a geodesic between $x$ and $y$.  Since $x$ and $y$ are insufficient, Claim \ref{lem:notnice} implies that we have $d(z,y) < r-1$. Note that $d(x,z)=r+1$ implies that $N(z) \cap N(x_i)= \emptyset$ for all $i\leq r-2 $.  Thus $N(z),\ N(x_1),\ N(x_4), \dots, N(x_{r-2})$ are all disjoint and  contained in $N^r(y)$.  This contradicts our assumption that $y$ is insufficient.
\end{proof}

Let $X$ be the set of insufficient vertices in $\mathcal{G}$.  We define an equivalence relation ``$\sim$" on~$X$ by letting $x \sim y$ if $d(x,y)\leq 2$.  For $r \geq 6$, Claim \ref{lem:nice} implies that this is an equivalence relation.   Let $X_1, \dots, X_l$ be the equivalence classes  of $\sim$.

The following claim gives a lower bound on the order of $\mathcal{G}$. 
\begin{claim}\label{lem:vertexbound}
$|\mathcal{G}| \geq \left( \frac{r+3}{6} \right) \delta l $
\end{claim}
\begin{proof}
Claims \ref{lem:nice} and \ref{cor:cor} imply that $d(X_i, X_j)\geq r+1$ for all $i\ne j$.  If $d(X_i, X_j)=r+1$ for some $i$ and $j$, then Claim \ref{lem:notnice} implies that we have $d(X_i, z) < r-1$ or $d(X_j, z) < r-1$ for all $z \in V(\mathcal{G})$.  Then, Claim \ref{lem:nice} implies that all the vertices outside of $X_i$ and $X_j$ are sufficient.  This gives us two cases to consider:
\begin{enumerate}[(i)]
\item $d(X_i, X_j)\geq r+2$ for all $i\neq j$.
\item  $d(X_1, X_2)=r+1$.
\end{enumerate}

Suppose that (i) holds (this includes the case when $l=1$).  For each $i$, choose $x_i$ to be a vertex in $X_i$.  
Note that $N^{\left\lfloor\frac{r}{2}\right\rfloor} (x_i)$ contains a length $\left\lfloor \frac{r}{2}\right\rfloor$ geodesic, $P_i$.  Using Claim \ref{cla:geodesic} gives
$$\left| N^{\left\lfloor\frac{r}{2}\right\rfloor +1} (X_i) \right|\geq |N(P_i)| \geq \left( \left\lfloor \frac{1}{3}\left\lfloor \frac{r}{2} \right\rfloor \right\rfloor +1 \right)\delta \geq \left(\frac{r+3}{6}\right)\delta .$$
For the last inequality we are using the fact that $r \equiv 0 \pmod{3}$.   Note that  (i) implies that $N^{\left\lfloor\frac{r}{2}\right\rfloor +1} (X_i) \cap N^{\left\lfloor\frac{r}{2}\right\rfloor +1} (X_j) = \emptyset$ for all $i, j$.  This implies that the following holds:
$$|V(\mathcal{G})|\geq  \sum_{i=1} ^l \left| N^{\left\lfloor\frac{r}{2}\right\rfloor +1} (X_i) \right| \geq \left(\frac{r+3}{6}\right)\delta l .$$

Suppose that (ii) holds.
Using Claim \ref{cla:geodesic} we obtain $$|V(\mathcal{G})|\geq \left( \frac{r}{3} +1 \right)\delta=\left(\frac{r+3}{6}\right)\delta l .$$
\end{proof}

When $x$ is insufficient, the following claim gives a lower bound on the order of $N^r(x)$.  
\begin{claim}\label{lem:insufficientbound}
Suppose that  $x$ is an insufficient vertex in the equivalence class $X_i$.  Then, $\left|N^r(x) \right| \geq \left|X_i \right| + \frac{r}{3}\delta$ holds.
\end{claim}
\begin{proof}
By Claim \ref{cla:longpath}, we can choose a length $r$ geodesic from $x$.  Let $x, x_1, \dots, x_r$ be the vertices of this geodesic.  Suppose that $X_i \cap N(x_j)$ is nonempty for some $x_j$.  Choose $y\in X_i \cap N(x_j)$. Clearly $j\leq 1$ must hold, since otherwise $N(x),\ N(x_3), N(x_6), \dots, N(x_r)$ would all be contained in $N^r(y)$, contradicting that $y$ is insufficient (since $y \in X_i$).

Hence $X_i,\ N(x_2),\ N(x_5), \dots, N(x_{r-1})$ are all disjoint and contained in $N^r(x)$ proving the claim.
\end{proof}

Claims \ref{lem:vertexbound} and \ref{lem:insufficientbound} are all that is needed to prove Theorem \ref{thm:loops}, as follows

\begin{align*}
2e(\mathcal{G}^r) - \left(\frac{r+3}{3}-\frac{3}{2(r+3)} \right)\delta |\mathcal{G}| - |\mathcal{G}|&= \sum_{x \in V(\mathcal{G})} |N^r(x)|-\left(\frac{r+3}{3}-\frac{3}{2(r+3)} \right)\delta |\mathcal{G}|\\
&\geq \frac{3}{2(r+3)}\delta |\mathcal{G}| +  \sum_{i=1} ^l \left(|X_i|^2-|X_i|\delta  \right) \\
&\geq \frac{1}{4}\delta^2 l + \sum_{i=1} ^l \left( |X_i|^2-|X_i|\delta  \right)\\
&= \sum_{i=1} ^l \left(|X_i|^2-|X_i|\delta +\frac{1}{4}\delta^2 \right)\\
&=\sum_{i=1} ^l \left(|X_i| - \frac{1}{2}\delta \right)^2  \\
&\geq 0 .\\ 
\end{align*}
The first equality uses the fact that $\mathcal{G}^r$ contains a loop at every vertex, hence $2e(\mathcal{G}^r)
=\sum_{x \in V(\mathcal{G})} |N^r(x)|+ |\mathcal{G}|$.  The first inequality follows from the definition of ``sufficent vertex", Claim \ref{lem:insufficientbound} and rearranging, while the second follows from Claim \ref{lem:vertexbound}.  This completes the proof.
\end{proof}

\begin{proof}[Proof of Theorem~\ref{thm:noloops}]
Let $\mathcal{G}$ be a copy of $G$ with a loop added at every vertex.  Then $\mathcal{G}^r$ will be isomorphic to  $G^r$ with a loop added at every vertex.  Note that we have $e(\mathcal{G}^r)= e(G^r)+ |G|$, and $\delta(\mathcal{G})=\delta(G)+1$. Substitute these into Theorem~\ref{thm:loops} obtain the following.
\begin {itemize}
 \item If $r\equiv 0 \pmod{3}$, then we have
$$e(G^r)\geq \left(\frac{r+3}{6}-\frac{3}{4(r+3)} \right) \delta(G) |G|  + \left(\frac{r+3}{6}-\frac{3}{4(r+3)} -\frac{1}{2} \right) |G|.$$
 \item If $r\not \equiv 0 \pmod{3}$, then we have
$$e(G^r)\geq \frac{1}{2}\left\lceil \frac{r}{3} \right\rceil \delta(G) |G| + \left(\frac{1}{2}\left\lceil \frac{r}{3} \right\rceil -\frac{1}{2} \right)|G|.$$
\end {itemize}

Note that for $r\geq 3$, both $\frac{r+3}{6}-\frac{3}{4(r+3)} -\frac{1}{2} $ and  $\frac{1}{2}\left\lceil \frac{r}{3} \right\rceil -\frac{1}{2} $ are non-negative, so Theorem~\ref{thm:noloops} follows.
\end{proof}

\bigskip\noindent
\textbf{Acknowledgment}

\smallskip\noindent
The author would like to thank his supervisors Jan van den Heuvel and Jozef Skokan for advice and discussions.

\bibliography{cdpref}
\bibliographystyle{abbrv}
\end{document}